    \let\geq\geqslant
\newcommand{\bmu}{\bm{u}}
\newcommand{\bmx}{\bm{x}}
\newcommand{\bmd}{\bm{d}}
\newcommand{\bmr}{\bm{r}}
\newcommand{\bmz}{\bm{z}}
\DeclareMathOperator{\im}{im}
\DeclareMathOperator{\diag}{diag}
\let\geq\geqslant
\newcommand{\calD}{\ensuremath{\mathcal{D}}}
\newcommand{\bmat}{\begin{matrix}}
\newcommand{\emat}{\end{matrix}}
\newcommand{\bbm}{\begin{bmatrix}}
\newcommand{\ebm}{\end{bmatrix}}
\newcommand{\bbma}{\begin{bmatrix*}[r]}
\newcommand{\ebma}{\end{bmatrix*}}
\newcommand{\bpm}{\begin{pmatrix}}
\newcommand{\epm}{\end{pmatrix}}
\newcommand{\bvm}{\begin{vmatrix}}
\newcommand{\evm}{\end{vmatrix}}
\newcommand{\bse}{\begin{subequations}}
\newcommand{\ese}{\end{subequations}}
\newcommand{\beq}{\begin{equation}}
\newcommand{\eeq}{\end{equation}}
\newcommand{\ben}{\renewcommand{\labelenumi}{\arabic{enumi}.}
\renewcommand{\theenumi}{\arabic{enumi}}\begin{enumerate}}
\newcommand{\een}{\end{enumerate}}
\newcommand{\beni}{\renewcommand{\labelenumi}{\roman{enumi}.}
\renewcommand{\theenumi}{\roman{enumi}}\begin{enumerate}}
\newcommand{\eeni}{\end{enumerate}}
\newcommand{\bena}{\renewcommand{\labelenumi}{\alph{enumi}.}
\renewcommand{\theenumi}{\alph{enumi}}\begin{enumerate}}
\newcommand{\eena}{\end{enumerate}}
\newcommand{\bit}{\begin{itemize}}
\newcommand{\eit}{\end{itemize}}
\newcommand{\bthe}{\begin{theorem}}
\newcommand{\ethe}{\end{theorem}}
\newcommand{\blem}{\begin{lemma}}
\newcommand{\elem}{\end{lemma}}
\newcommand{\bprop}{\begin{proposition}}
\newcommand{\eprop}{\end{proposition}}
\newcommand{\bex}{\begin{example}}
\newcommand{\eex}{\end{example}}
\newcommand{\bas}{\begin{assumption}}
\newcommand{\eas}{\end{assumption}}
\newcommand{\bre}{\begin{remark}}
\newcommand{\ere}{\end{remark}}
\newcommand{\bcor}{\begin{corollary}}
\newcommand{\ecor}{\end{corollary}}
\newcommand{\bdfn}{\begin{definition}}
\newcommand{\edfn}{\end{definition}}
\newcommand{\bcon}{\begin{conjecture}}
\newcommand{\econ}{\end{conjecture}}
\newcounter{todocounter}
\newtheorem{theorem}{Theorem}
\newtheorem{lemma}[theorem]{Lemma}
\newtheorem{prop}[theorem]{Proposition}
\theoremstyle{definition}
\newtheorem{definition}[theorem]{Definition}
\newtheorem{example}[theorem]{Example}
\newtheorem{conjecture}[theorem]{Conjecture}
\newtheorem{remark}[theorem]{Remark}
\title{An informativity approach to the data-driven algebraic regulator problem} 
\author{ Harry L. Trentelman, Henk J. van Waarde and 
M. Kanat Camlibel
	\thanks{The authors are with the Bernoulli Institute for Mathematics, Computer Science, and Artificial Intelligence, University of Groningen, Nij\-enborgh 9, 9747 AG, Groningen, The Netherlands. (email: {\footnotesize{\tt h.j.van.waarde@rug.nl; m.k.camlibel@rug.nl;h.l.trentelman@rug.nl}}).}
}
\begin{document}

\maketitle
\begin{abstract}
In this paper, the classical algebraic regulator problem is studied in a data-driven context. The endosystem is assumed to be an unknown system that is interconnected to a known exosystem that generates disturbances and reference signals. The problem is to design a regulator so that the output of the (unknown) endosystem tracks the reference signal, regardless of its initial state and the incoming disturbances. In order to do this, we assume that we have a set of input-state data on a finite time-interval. We introduce the notion of data informativity for regulator design, and establish necessary and sufficient conditions for a given set of data to be informative. Also, formulas for suitable regulators are given in terms of the data.  Our results are illustrated by means of two extended examples.

\end{abstract}

\section{Introduction}
Recently, the paradigm of data-driven control has gained a lot of attention in analysis and controller design of linear systems \cite{vanWaarde2020b,vanWaarde2020c, DePersis2020,vanWaarde2020,Berberich2020,Dai2018,Monshizadeh2020,Markovsky2008,Coulson2019,Baggio2020, Allibhoy2020,Tabuada2020,Rapisarda2011}. Instead of using an explicit mathematical model, the data-driven approach uses only data obtained from the unknown system for verifying its system theoretic properties and for constructing controllers. Recently, it was argued in \cite{vanWaarde2020} that the data-driven approach can also be useful in cases where the given data do not give sufficient information to identify the `true' model for the system, for example due to the fact that the data are not persistently exciting.
Indeed, in \cite{vanWaarde2020} the notion of {\em informativity} of data was introduced to cover situations in which a given set of data gives rise to a whole {\em family of system models} that are compatible with the data. In other words, situations in which it is impossible to distinguish between models on the basis of the given data. A set of data is called informative for a given system property if the property holds for all systems compatible with the data. In \cite{vanWaarde2020}, the notion of informativity was also developed in the context of controller design. In particular, conditions for informativity of data for the following control problems were given: state feedback stabilization, deadbeat control, linear quadratic optimal control, and stabilization by dynamic output feedback. Also, formulas (in terms of the data) were given to compute suitable controllers. 

The aim of the present paper is to extend the framework of informativity to the classical algebraic regulator problem (see e.g \cite{Davison1975,Francis1977,Francis1975,Isidori1990} and the textbooks \cite{Trentelman2001,Saberi2000}). This is the problem of finding a feedback controller (called a regulator) that makes the output of the controlled system track some a priori given reference signal, regardless of the disturbance input entering the system, and the initial state. In the context of the algebraic regulator problem, the relevant reference signals and disturbances (such as step functions, ramps or sinusoids) are signals that are generated as solutions of suitable autonomous linear systems. Given such reference signal and class of disturbance signals, one first constructs a suitable generating autonomous system (called the exosystem). Next, this exosystem is interconnected to the control system (called the endosystem), and a new output is defined as the difference between the original system output and the reference signal. A regulator should then be designed to make the output of the interconnection converge to zero for all disturbances and initial states.

In this paper, the `true' endosystem is assumed to be unknown, and therefore no mathematical model is available. Instead, we have collected data on the input, endosystem state, and exosystem state in the form of samples on a finite time-interval. The exosystem is assumed to be known, since this system models the reference signals and possible disturbance inputs. Also, the matrices in the output equations are assumed to be known, since these specify the design specification (namely the output that should converge to zero) on the controlled system.  A given set of data will then be called informative for regulator design if the data contain sufficient information to design a single regulator for the entire family of systems that are compatible with this set of data.  We will establish necessary and sufficient conditions for a given set of data to be informative for regulator design.  In particular, it will be shown how to replace the characteristic regulator equations by their data-driven counterparts, and to compute suitable regulators.

We note that data-driven regulator design was studied before in \cite{deCarolis2018} and \cite{Carnevale2017}, albeit from a rather different perspective. We also mention alternative methods that deal with tracking objectives, such as iterative feedback tuning (IFT) and virtual reference feedback tuning (VRFT) as developed in \cite{Hjalmarsson1998} and \cite{Campi2002}, respectively. These methods do however not address the classical regulator problem, and are thus quite different from the work that will be presented in this paper.

The main contributions of the present paper are the following.
\begin{enumerate}
\item
We give a definition of the problem of data-driven tracking and regulation using the concept of informativity.
\item
We give necessary and sufficient conditions for data to be informative for regulator design, i.e., for the existence of a single regulator for all systems compatible with the given data.
\item
We establish formulas for computing these regulators, entirely in terms of the data.
\end{enumerate}
It should be noted that these regulators may be called {\em robust}, in the sense that a single regulator works for the whole set of systems that are compatible with the given data, see also \cite{deCarolis2018}.

The outline of this paper is as follows. In Section II, we illustrate the data-driven problem of tracking and regulation using an extended example. Subsequently we put the problem in a general framework, and define the concept of informativity for regulator design. In Section III, we review some classical basic material on the regulator problem. Then, in Section IV we formulate our main result, giving necessary and sufficient conditions for informativity for regulator design, and formulas to compute regulators. The main result is illustrated by means of two extended examples. Finally, in Section V, we formulate our conclusions.

\section{Data-driven tracking and regulation}\label{sec:prob}
We will first illustrate the problem to be considered in this paper by means of an extended example. 
\begin{example} \label{ex: example}
Consider the scalar linear time-invariant discrete-time system
\begin{equation}\label{e:ex}
\bmx(t+1)=a_s\bmx(t)+b_s\bmu(t) + \bmd(t),
\end{equation}
where $\bmx$ is the state, $\bmu$ the control input, and $\bmd$ a disturbance input. The values of $a_s$ and $b_s$ in this system representation are unknown. We assume that the disturbance can be any constant signal of finite amplitude. Suppose that we want the state $\bmx(t)$ to track the given reference signal $\bmr(t) = \cos \frac{\pi}{2}t$, for any constant disturbance input, regardless of the initial state of the system.  We want to design a control law for \eqref{e:ex} that achieves this specification. We assume that $\bmr$, $\bmx$ and $\bmd$ are available for feedback and allow control laws of the form 
\begin{equation} \label{e:law}
\bmu(t)  = k_1\bmr(t) + k_2 \bmr(t+1) + k_3 \bmd(t) + k_4 \bmx(t).
\end{equation}
Interconnecting \eqref{e:ex} and \eqref{e:law} results in the controlled system
\begin{equation*}
\begin{split}
\bmx(t+1) = (a_s +b_sk_4)\bmx(t) +(b_sk_3 +1)\bmd(t) + \\  b_sk_1\bmr(t) + b_sk_2\bmr(t+1),
\end{split}
\end{equation*}
where the gains $k_i$ should be designed such that $\bmx(t) - \bmr(t) \rightarrow 0$ as $t \rightarrow \infty$ for any constant disturbance input $\bmd$ and initial state $\bmx(0)$. It is also required that the controlled system is internally stable,  in the sense that $a_s + b_s k_4$ is stable \footnote{We say that a matrix is {\em stable\/} if all its eigenvalues are contained in the open unit disk.}. 

The values of $a_s$ and $b_s$ that represent the true system are unknown, but in the data-driven context it is assumed that we do have access to certain data. In particular, it is assumed that we have finite sequences of samples of $\bmx(t)$, $\bmu(t)$ and $\bmd(t)$ on a given time interval $\{0, 1, \ldots, \tau\}$, given by 
\begin{subequations}\label{eq:UXdata}
\begin{align}
U_-& := \bbm u(0) & u(1) & \cdots & u(\tau-1)\ebm, \\
X  & := \bbm x(0) & x(1) & \cdots & x(\tau)\ebm, \\
D_-& := \bbm d(0) & d(1) & \cdots & d(\tau-1)\ebm,
\label{eq: UXdata2}
\end{align}
\end{subequations}
where, in this particular example, by assumption $d(t) = d(0)$ for $t = 1,2, \ldots \tau-1$.  Define
\begin{align*}
X_+ & := \bbm x(1) & x(2) & \cdots & x(\tau) \ebm, \\
X_-  &:= \bbm x(0) & x(1) & \cdots & x(\tau-1) \ebm.
\end{align*}
It is assumed that these data are generated by the true system, so we must have
$ 
X_+=a_sX_-+b_sU_- +D_- .
$ 
For this example, the problem of data-driven control design is now to use the data \eqref{eq:UXdata} to determine whether a suitable controller \eqref{e:law} exists, and to compute the associated gains $k_1, k_2, k_3$ and $k_4$
using only these data.

Note that in the above, both the reference signal and the disturbance signals are generated by the  autonomous linear system
\begin{equation} \label{e:example-exo}
\begin{bmatrix}
\bmr_1(t+1) \\ \bmr_2(t+1) \\ \bmd(t+1)
\end{bmatrix} =
\begin{bmatrix} 0 & 1  &0 \\ -1 & 0 & 0 \\ 0 & 0 & 1
\end{bmatrix}
\begin{bmatrix}
\bmr_1(t) \\ \bmr_2(t) \\\bmd(t)
\end{bmatrix}
\end{equation}
with initial state $\bmr_1(0) = 1$ and $\bmr_2(0) = 0$, and $\bmd(0)$ arbitrary. Indeed, it can be seen that the reference signal $\bmr(t)=  \cos \frac{\pi}{2}t$ is equal to $\bmr_1(t)$. In addition, the solutions $\bmd(t)$ are all constant signals of finite amplitude. The autonomous system \eqref{e:example-exo} is called the {\em exosystem}.

The interconnection of the (unknown) to be controlled system \eqref{e:ex} (called the {\em endosystem}) with the exosystem \eqref{e:example-exo}, is represented by
\begin{equation} \label{e:exo-endo}
\begin{bmatrix} \bmr_1(t+1) \\ \bmr_2(t +1) \\ 
\bmd(t+1)  \\ \bmx(t+1) 
\end{bmatrix} 
\begin{bmatrix}
0 & 1 & 0 & 0 \\
-1 & 0 & 0 & 0 \\
0 & 0 & 1 & 0 \\
0& 0 & 1 & a_s
\end{bmatrix}
\begin{bmatrix} \bmr_1(t) \\ \bmr_2(t) \\ 
\bmd(t)  \\ \bmx(t) 
\end{bmatrix} 
+ \begin{bmatrix}
0 \\ 0\\ 0 \\ b_s
\end{bmatrix}
\bmu(t).
\end{equation}
In this representation, the part corresponding to the exosystem is known, but the part corresponding to the endosystem (specifically: $a_s$ and $b_s$) is unknown. We now also specify a (known) output equation 
$$
\bmz(t) =
\begin{bmatrix} 1 & 0 & 0 & -1
\end{bmatrix}
\begin{bmatrix} \bmr_1(t) \\ \bmr_2(t) \\ 
\bmd(t)  \\ \bmx(t)
\end{bmatrix}.
$$
Then the problem of our example can be rephrased as:  design a full {\em state feedback} control law \begin{equation*} 
\bmu(t) = k_1\bm r_1(t) +k_2 \bmr_2(t) + k_3 \bmd(t) + k_4 \bmx(t)
\end{equation*}
for the system \eqref{e:exo-endo}
such that in the controlled system we have $\bmz(t) \rightarrow 0$ as $t \rightarrow \infty$ for the initial states 
$\bmr_1(0) = 1$, $\bmr_2(0) = 0$, and $\bmd(0)$ arbitrary, while internal stability is achieved in the sense that 
$a_s + b_s k_4$ is a stable matrix. In order to allow tracking of signals from the richer class of all reference signals of the form $\bmr(t) = A \cos(\frac{1}{2} \pi t+ \omega)$ ($A$ and $\omega$ are determined by the initial states 
$\bmr_1(0) = 1$ and $\bmr_2(0)$), we may slightly relax the problem formulation and require $\bmz(t) \rightarrow 0$ as $t \rightarrow \infty$ for {\em all} initial states $\bmr_1(0), \bmr_2(0)$ and $\bmd(0)$. 
\end{example}

After having introduced our problem set up by means of the above example, we will now formulate it in a general framework. 

Consider an endosystem represented by
\begin{equation} \label{e:endo-general}
\bmx_2(t+1) = A_{2s}\bmx(t) + B_{2s}\bmu(t) + A_3\bmx_1(t).
\end{equation}
Here, $\bmx_2$ is the $n_2$-dimensional state, $\bmu$ the $m$-dimensional input, and $\bmx_1$ the $n_1$-dimensional state of the exosystem 
\begin{equation} \label{e:exo}
\bmx_1(t+1) = A_1\bmx_1(t).
\end{equation}
that generates all possible reference signals and disturbance inputs. The matrices $A_{2s}$ and $B_{2s}$ are unknown, but the matrix $A_1$ is known. Also $A_3$ is a known matrix that represents how the endosystem interconnects with the exosystem. 
The output to be regulated is specified by
\begin{equation} \label{e:outequation}
\bmz(t) = D_1 \bmx_1(t) + D_2 \bmx_2(t) + E \bmu(t),
\end{equation}
where the matrices $D_1,D_2$ and $E$ are known. By interconnecting the endosystem with the  state feedback controller
\begin{equation} \label{e:feedback}
\bmu(t) = K_1 \bmx_1(t) + K_2 \bmx_2(t), 
\end{equation}
we obtain the controlled system 
$$
\begin{bmatrix} \bmx_1(t+1) \\ \bmx_2(t+1)
\end{bmatrix}
   = 
\begin{bmatrix} A_1   & 0 \\ A_3 + B_2 K_1& A_{2s} + B_{2s} K_2
\end{bmatrix}
\begin{bmatrix} \bmx_1(t) \\ \bmx_2(t) 
\end{bmatrix},
$$
$$
 \bmz(t)  =  ~(D_1 + E K_1) \bmx_1(t) + (D_2  + E K_2)\bmx_2(t).
 $$
If $\bmz(t) \rightarrow 0$ as $t \rightarrow \infty$ for all initial states $\bmx_1(0)$ and $\bmx_2(0)$, we say that the controlled system is {\em output regulated}.  If $A_{2s} + B_{2s} K_2$ is a stable matrix we call the controlled system {\em endo-stable}. If the control law \eqref{e:feedback} makes the controlled system both output regulated and endo-stable, we call it a {\em regulator}.

As illustrated in the example above, we assume that we do not know the true endosystem \eqref{e:endo-general}, and therefore the design of a regulator can only be based on available data. In the general framework, these are finite sequences of samples of $\bmx_1(t), \bmx_2(t)$ and $\bmu(t)$ on a given time interval $\{0, 1, \ldots, \tau\}$ given by 
\begin{align*}
U_-& := \bbm u(0) & u(1) & \cdots & u(\tau-1)\ebm, \\
X_{1-}  & := \bbm x_1(0) & x_1(1) & \cdots & x_1(\tau-1)\ebm, \\
X_2  & := \bbm x_2(0) & x_2(1) & \cdots & x_2(\tau)\ebm.
\end{align*}
An endosystem with (unknown) system matrices $(A_2,B_2)$ is called {\em compatible} with these data if $A_2$ and $B_2$ satisfy the equation
\vspace{-2mm}
\begin{equation}  \label{e:data}
X_{2+} = A_{2} X_{2-} + A_3 X_{1-} + B_{2} U_- , 
\end{equation}
\vspace{-2mm}
where we denote
\begin{align*}
X_{2-}& := \bbm x_2(0) & x_2(1) & \cdots & x_2(\tau -1) \ebm, \\
X_{2+}& := \bbm x_2(1) & x_2(2) & \cdots & x_2(\tau) \ebm.
\end{align*}	
The set of all $(A_2,B_2)$ that are compatible with the data is denoted by $\Sigma_\calD$, i.e.,
\begin{equation} \label{e:SigmaD}
\Sigma_\calD := \left\{ (A_2,B_2) \mid  \mbox{\eqref{e:data} holds} \right\}.
\end{equation}
We assume that the true endosystem $(A_{2s},B_{2s})$ is in $\Sigma_\calD$, i.e. the true system is compatible with the data.  In general, the equation \eqref{e:data} does not specify the true system uniquely, and many endosystems $(A_2,B_2)$ may be compatible with the same data.

Now we turn to controller  design based on the data $(U_-,X_{1-},X_2)$. Note that, since on the basis of the given data we can not distinguish between
the true endosystem and any other endosystem compatible with these data, a controller will be a regulator for the true system only if it is a regulator for any system with $(A_2,B_2)$ in $\Sigma_\calD$. If such regulator exists, we call the data {\em informative for regulator design}. More precisely:
\begin{definition}\label{def:informativity-regulator}
	We say that the data $(U_-,X_{1-},X_2)$ are informative for regulator design if there exists $K_1$ and $K_2$ such that the control law $\bmu(t) = K_1 \bmx_1(t) + K_2 \bmx_2(t)$ is a regulator for any endosystem with $(A_2,B_2)$ in $\Sigma_\calD$. 
\end{definition}

The problem that will be considered in this paper is to find necessary and sufficient conditions on the data $(U_-,X_{1-},X_2)$ to be informative for regulator design. Also, in case that these conditions are satisfied, we will explain how to compute a regulator using only these data. Before addressing this problem, in the next section we will review some basic material on the regulator problem.

\section{The regulator problem}\label{sec:regulator-problem}
In this section, we briefly review some basic material on the regulator problem. Following \cite{Trentelman2001}, we distinguish between analysis and design. 

We first consider the analysis question under what conditions a controlled system is endo-stable and output regulated. 
Consider the autonomous linear system represented by 
\begin{align} \label{e:autsys}
\bmx_1(t+1)  &= A_1 \bmx_1(t), \nonumber \\
\bmx_2(t+1)  &= A_{2}\bmx_2(t) + A_3\bmx_1(t),
\end{align}
$$
\bmz(t) = D_1 \bmx_1(t) + D_2 \bmx_2(t).
$$
In accordance with the terminology introduced in Section \ref{sec:prob}, we call this system endo-stable if $A_2$ is a stable matrix. We call it output regulated if $\bmz(t) \rightarrow 0$ as $t \rightarrow \infty$ for all initial states $\bmx_1(0)$ and $\bmx_2(0)$. The following is the discrete-time version of Lemma 9.1 in  \cite{Trentelman2001}
\begin{prop} \label{th:autonomous}
Assume that $A_1$ is anti-stable \footnote{We say that a matrix is {\em anti-stable\/} if all its eigenvalues $\lambda$ satisfy $|\lambda| \geq 1$}. Then the system \eqref{e:autsys} is endo-stable and output regulated if and only if  $A_2$ is stable and there exists a matrix $T$ satisfying the equations
\begin{equation}\label{e:aut-eq}
TA_1 - A_2 T  = A_3 ,~
D_1 + D_2T   = 0.
\end{equation}
In this case, $T$ is unique.
\end{prop}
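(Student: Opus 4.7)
The plan is to introduce the change of coordinates $\bmy_2(t) := \bmx_2(t) - T\bmx_1(t)$ for an appropriately chosen matrix $T \in \R^{n_2 \times n_1}$. A direct computation using the dynamics of $\bmx_1$ and $\bmx_2$ gives
\[
\bmy_2(t+1) = A_2 \bmy_2(t) + (A_3 + A_2 T - T A_1)\bmx_1(t),
\]
so the $\bmy_2$-dynamics decouple from $\bmx_1$ exactly when $T$ satisfies the Sylvester equation $T A_1 - A_2 T = A_3$. In these coordinates the regulated output takes the triangular form
\[
\bmz(t) = (D_1 + D_2 T)\bmx_1(t) + D_2 \bmy_2(t),
\]
and this identity will be the workhorse for both directions of the proof.

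For the \emph{if}-direction, suppose $A_2$ is stable and $T$ satisfies both equations in \eqref{e:aut-eq}. Endo-stability is immediate. The second equation $D_1 + D_2 T = 0$ annihilates the first summand of $\bmz(t)$, while the first equation reduces the $\bmy_2$-dynamics to $\bmy_2(t+1) = A_2 \bmy_2(t)$. Stability of $A_2$ then forces $\bmy_2(t) \to 0$, and therefore $\bmz(t) \to 0$ for every initial condition.

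For the \emph{only if}-direction, endo-stability gives stability of $A_2$ directly. Since $A_1$ is anti-stable and $A_2$ is stable, their spectra are disjoint, so the Sylvester equation $T A_1 - A_2 T = A_3$ admits a unique solution $T$; this settles the uniqueness claim at the same time. With this $T$, the decoupling identity yields $\bmy_2(t) \to 0$, and output regulation forces $M A_1^t \bmx_1(0) \to 0$ for every $\bmx_1(0)$, where $M := D_1 + D_2 T$.

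The main obstacle is to conclude $M = 0$ from $M A_1^t \to 0$ under anti-stability of $A_1$. I would argue this by induction along the generalized eigenchains $v_1, v_2, \ldots, v_k$ of $A_1$ corresponding to an eigenvalue $\lambda$ with $|\lambda| \geq 1$: a direct computation gives $A_1^t v_j = \lambda^t v_j + \sum_{i=1}^{j-1} \binom{t}{i}\lambda^{t-i} v_{j-i}$. Once the induction hypothesis $M v_i = 0$ has been established for all $i < j$, the requirement $M A_1^t v_j = \lambda^t M v_j \to 0$ combined with $|\lambda| \geq 1$ forces $M v_j = 0$. Since the generalized eigenvectors span $\R^{n_1}$, we conclude $M = 0$, i.e., $D_1 + D_2 T = 0$, completing the proof.
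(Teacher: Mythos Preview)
Your proof is correct. The paper does not actually supply a proof of this proposition; it is stated without argument as the discrete-time version of Lemma~9.1 in \cite{Trentelman2001}, and your approach via the coordinate shift $\bmy_2 = \bmx_2 - T\bmx_1$ together with the unique solvability of the Sylvester equation is precisely the classical route. One cosmetic remark: in the last step the generalized eigenvectors of $A_1$ span $\C^{n_1}$ rather than $\R^{n_1}$ when $A_1$ has non-real eigenvalues, but since $M A_1^t \to 0$ as a real matrix this extends automatically to complex vectors and the conclusion $M=0$ follows just the same.
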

Next, we consider the design problem and review conditions under which, for a given interconnection of an endosystem and exosystem, there exists a regulator, i.e., a controller that makes the controlled system endo-stable and output regulated. 
For the endosystem $\bmx_2(t+1)  = A_{2}\bmx_2(t) + B_2 \bmu(t) + A_3\bmx_1(t)$ together with the exosystem \eqref{e:exo} and output equation \eqref{e:outequation}, 
the following is well-known and can be proven easily by extending results from \cite{Trentelman2001} to the discrete-time case:
\begin{prop} \label{th:existence}
Assume that $A_1$ is anti-stable. There exists a regulator of the form \eqref{e:feedback} if and only if $(A_2,B_2)$ is stabilizable and there exist matrices $T$ and $V$ satisfying the regulator equations
\begin{equation} \label{e:reg-eq}
TA_1 - A_2 T - B_2 V = A_3 , ~
D_1 + D_2T + EV = 0.
\end{equation}
In this case, a regulator is obtained as follows: choose any $K_2$ such that $A_2 + B_2 K_2$ is stable, and define $K_1 := -K_2 T + V$.
\end{prop}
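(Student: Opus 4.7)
My plan is to prove both directions by reducing to the autonomous case treated in Proposition \ref{th:autonomous}, since applying feedback $\bmu = K_1 \bmx_1 + K_2 \bmx_2$ turns the endo/exo interconnection into an autonomous system of the form \eqref{e:autsys} with effective matrices
\[
\tilde A_2 = A_2 + B_2 K_2,\quad \tilde A_3 = A_3 + B_2 K_1,\quad \tilde D_1 = D_1 + E K_1,\quad \tilde D_2 = D_2 + E K_2.
\]
Since $A_1$ is anti-stable by assumption, Proposition \ref{th:autonomous} can be invoked on this closed-loop system in either direction.

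For necessity, suppose a regulator $(K_1,K_2)$ exists. Endo-stability of the closed-loop system immediately forces $A_2 + B_2 K_2$ to be stable, so $(A_2,B_2)$ is stabilizable. Output regulation together with Proposition \ref{th:autonomous} then yields a (unique) matrix $T$ with
\[
T A_1 - (A_2 + B_2 K_2) T = A_3 + B_2 K_1, \qquad (D_1 + E K_1) + (D_2 + E K_2) T = 0.
\]
Defining $V := K_1 + K_2 T$ and rearranging gives the regulator equations \eqref{e:reg-eq} directly.

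For sufficiency, assume $(A_2,B_2)$ is stabilizable and that $T,V$ solve \eqref{e:reg-eq}. Pick any $K_2$ making $A_2 + B_2 K_2$ stable and set $K_1 := V - K_2 T$, which is the formula given in the statement. Endo-stability of the closed loop is then automatic from the choice of $K_2$. To establish output regulation, I verify via Proposition \ref{th:autonomous} (whose hypothesis on $A_1$ again holds) that the same $T$ satisfies the two conditions for the closed loop: substituting $K_1 + K_2 T = V$ into the left-hand sides of the closed-loop equations reduces them, after a short rearrangement, to $T A_1 - A_2 T - B_2 V = A_3$ and $D_1 + D_2 T + E V = 0$, which are exactly the assumed regulator equations.

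The proof is essentially algebraic bookkeeping, and I do not expect a serious obstacle; the main point is simply to check that the same $T$ works for both the regulator equations and the autonomous Sylvester equation of Proposition \ref{th:autonomous}, with the change of variable $V = K_1 + K_2 T$ linking them. The anti-stability of $A_1$ is not used directly in the algebra but is what allows Proposition \ref{th:autonomous} to be invoked both to extract $T$ in the necessity part and to conclude output regulation in the sufficiency part.
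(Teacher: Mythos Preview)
Your argument is correct and is exactly the standard reduction to the autonomous case via Proposition~\ref{th:autonomous} that the paper has in mind; the paper itself does not spell out a proof but simply refers to \cite{Trentelman2001}, whose continuous-time proof proceeds precisely by the change of variable $V = K_1 + K_2 T$ that you use. There is nothing to add.
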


\vspace{-4mm}
\section{The data-driven regulator problem}\label{sec:mainresult}
Clearly, a necessary condition for the data $(U_-,X_{1-},X_2)$ to be informative for regulator design is that they are informative for endo-stabilization:
\begin{definition} \label{e: endostab}
We call the data $(U_-,X_{1-},X_2)$ are \textit{informative} for endo-stabilization if there exists $K_2$ such that $A_2 + B_2 K_2$ is a stable matrix for all $(A_2,B_2)$ in $\Sigma_\calD$.
\end{definition}
In order to obtain necessary and sufficient conditions for informativity for endo-stabilization we formulate:
\begin{prop} \label{prop:endo-basic}
Let $\tau$ be a positive integer. Let $Z, X$ be real $n \times \tau$ matrices and let $U$ be a real $m \times \tau$ matrix. Consider the set
$
\Sigma_{(Z,X,U)} : = \{ (A,B) \mid Z = AX +BU \}.
$
Then the following hold:
\begin{enumerate}
\item
There exists a matrix $K$ such that $A + BK$ is stable for all $(A,B) \in \Sigma_{(Z,X,U)}$ if and only if $X$ has full row rank, and there exists a right-inverse $X^{\dagger}$ such that $Z X^{\dagger}$ is stable. In that case, by taking $K := U X^{\dagger}$ we have $A + BK$ is stable for all $(A,B) \in \Sigma_{(Z,X,U)}$. 
\item
For any $K$ such that $A + BK$ is stable  for all $(A,B) \in \Sigma_{(Z,X,U)}$ there exists a right-inverse $ X^{\dagger}$ such that $K = U X^{\dagger}$, and, moreover, $A + BK = Z X^{\dagger}$ for all $(A,B) \in \Sigma_{(Z,X,U)}$. 
\end{enumerate}
\end{prop}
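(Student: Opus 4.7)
The plan is to build the proof around a single algebraic identity: whenever $X$ admits a right-inverse $X^\dagger$, substituting $AX = Z - BU$ into $A+BK$ yields
\[
A + BK \;=\; ZX^\dagger \;+\; B\bigl(K - UX^\dagger\bigr)
\]
for every $(A,B) \in \Sigma_{(Z,X,U)}$. The rest of the argument consists of verifying that full row rank of $X$ is necessary, and that any uniformly stabilizing $K$ is forced to take the shape $K = UX^\dagger$ for some right-inverse; after that, both items of the proposition fall out of the identity above.

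First, I would show that $X$ must have full row rank. Fix any $(A_0, B_0) \in \Sigma_{(Z,X,U)}$, which exists since, in the intended application, the true system is compatible with the data. If some nonzero $v$ satisfied $v^\top X = 0$, then $(A_0 + \alpha vv^\top, B_0) \in \Sigma_{(Z,X,U)}$ for every $\alpha \in \bbR$, and
\[
\trace\bigl((A_0 + \alpha vv^\top) + B_0 K\bigr) \;=\; \trace(A_0 + B_0 K) + \alpha\,\|v\|^2.
\]
Since the trace is the sum of the eigenvalues, this forces an eigenvalue of modulus at least $|\alpha|\|v\|^2/n$ minus a constant, which eventually leaves the unit disk and contradicts stability. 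With $X$ of full row rank a right-inverse $X^\dagger$ exists, and choosing $K := UX^\dagger$ in the identity above gives $A + BK = ZX^\dagger$ uniformly in $(A,B) \in \Sigma_{(Z,X,U)}$, which proves the \emph{if} direction of item~1 along with its explicit formula.

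For item~2, the crux is to show that any uniformly stabilizing $K$ can be written $K = UX^\dagger$ for some right-inverse $X^\dagger$. This is equivalent to the inclusion $\im \bbm I \\ K \ebm \subseteq \im \bbm X \\ U \ebm$, i.e., to the statement that every left annihilator $\bbm p^\top & q^\top \ebm$ of $\bbm X \\ U \ebm$ also satisfies $p^\top + q^\top K = 0$. I would prove this by contradiction via a rank-one perturbation: if $p^\top X + q^\top U = 0$ but $p^\top + q^\top K \neq 0$, pick $r$ with $\mu := (p^\top + q^\top K)r \neq 0$ and set $A_\alpha := A_0 + \alpha r p^\top$ and $B_\alpha := B_0 + \alpha r q^\top$. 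The null-space condition keeps $(A_\alpha, B_\alpha) \in \Sigma_{(Z,X,U)}$, while
\[
\trace(A_\alpha + B_\alpha K) \;=\; \trace(A_0 + B_0 K) + \alpha\mu,
\]
producing an unbounded eigenvalue exactly as before. Once the desired $X^\dagger$ is in hand, the Paragraph~1 identity immediately gives $A + BK = ZX^\dagger$ for every compatible $(A,B)$; picking any one of them also supplies the missing \emph{only if} of item~1, since $ZX^\dagger$ is then equal to a stable matrix. The main obstacle is the rank-one perturbation bookkeeping in both directions, specifically checking that the perturbed pair actually remains in $\Sigma_{(Z,X,U)}$; what makes it elementary is that the trace identity reduces unbounded eigenvalue growth to a one-line scalar computation.
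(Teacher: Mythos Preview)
Your argument is correct. The identity $A+BK = ZX^\dagger + B(K-UX^\dagger)$ holds for any right-inverse of $X$, your rank-one perturbations do remain in $\Sigma_{(Z,X,U)}$ because of the left-annihilator condition, and the trace bound $\rho(M)\geq |\trace M|/n$ delivers the instability. The equivalence between $K=UX^\dagger$ for some right-inverse and the inclusion $\im\bbm I\\K\ebm\subseteq\im\bbm X\\U\ebm$ is also correctly stated and used. The only implicit hypothesis is nonemptiness of $\Sigma_{(Z,X,U)}$ (needed to anchor the perturbations at some $(A_0,B_0)$), which you rightly flag as automatic in the intended application; without it the ``only if'' of item~1 is vacuous anyway.

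As for comparison: the paper does not actually prove this proposition but defers to Theorem~16 of \cite{vanWaarde2020}. The argument there also reduces to showing that $A+BK$ is constant across $\Sigma_{(Z,X,U)}$, but it does so through an auxiliary lemma to the effect that no nonzero linear subspace of matrices can be added to a stable matrix while preserving stability. Your trace-based rank-one perturbation accomplishes the same thing in a single self-contained step, which is arguably more elementary; the subspace lemma in the reference has the advantage of being a reusable tool in other informativity proofs.
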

\begin{proof}
The proof can be given by slightly adapting the proof of Theorem 16 in \cite{vanWaarde2020}.
\end{proof}
This immediately gives the following conditions for informativity for endo-stabilization.
\begin{lemma} \label{l:endo-spec}
The data $(U_-,X_{1-},X_2)$ are informative for endo-stabilization if and only if $X_{2-}$ has full row rank, and there exists a right inverse $X_{2-}^{\dagger}$ of $X_{2-}$ such that $(X_{2+} - A_3 X_{1-}) X_{2-}^{\dagger}$ is stable. In that case, by taking $K_2 := U_- X_{2-}^{\dagger}$ we have $A_2 + B_2 K_2$ is stable for all $(A_2,B_2) \in \Sigma_{\calD}$.
\end{lemma}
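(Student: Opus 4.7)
The plan is to reduce the lemma directly to Proposition \ref{prop:endo-basic} by rewriting the data equation in the form used there. The defining equation for $\Sigma_\calD$ is
\[
X_{2+} = A_2 X_{2-} + A_3 X_{1-} + B_2 U_-,
\]
and since $A_3$ and $X_{1-}$ are known, I can move the term $A_3 X_{1-}$ to the left-hand side to obtain
\[
X_{2+} - A_3 X_{1-} = A_2 X_{2-} + B_2 U_-.
\]
Thus, setting $Z := X_{2+} - A_3 X_{1-}$, $X := X_{2-}$, and $U := U_-$, the set $\Sigma_\calD$ coincides exactly with the set $\Sigma_{(Z,X,U)}$ of Proposition \ref{prop:endo-basic}.

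Next, I would apply statement (1) of Proposition \ref{prop:endo-basic} to this identification. That statement says there exists $K$ with $A + BK$ stable for all $(A,B) \in \Sigma_{(Z,X,U)}$ if and only if $X$ has full row rank and some right-inverse $X^\dagger$ makes $Z X^\dagger$ stable, in which case $K := U X^\dagger$ is such a gain. Translating back, this is precisely the claim that informativity for endo-stabilization is equivalent to $X_{2-}$ having full row rank together with the existence of a right-inverse $X_{2-}^\dagger$ such that $(X_{2+} - A_3 X_{1-}) X_{2-}^\dagger$ is stable, with the concrete choice $K_2 := U_- X_{2-}^\dagger$ working. Definition \ref{e: endostab} matches the quantification pattern (``exists $K_2$ such that \ldots for all $(A_2,B_2) \in \Sigma_\calD$'') used in the proposition, so no additional reformulation is needed.

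There is essentially no obstacle here: the only subtlety is to recognize that the known, data-dependent affine offset $A_3 X_{1-}$ on the right-hand side of the endosystem data equation can be absorbed into the left-hand side, producing an equation of exactly the form to which Proposition \ref{prop:endo-basic} applies. Once this substitution is made, the lemma follows as an immediate specialization, and I would write the proof as a one-line reduction citing Proposition \ref{prop:endo-basic}.
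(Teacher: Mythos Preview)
Your proposal is correct and matches the paper's approach exactly: the paper states the lemma immediately after Proposition~\ref{prop:endo-basic} with the words ``This immediately gives the following conditions for informativity for endo-stabilization,'' and your substitution $Z := X_{2+} - A_3 X_{1-}$, $X := X_{2-}$, $U := U_-$ is precisely the one-line reduction the authors have in mind.
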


The following theorem is the main result of this paper. It gives necessary and sufficient conditions on the data to be  informative for regulator design, and explains how suitable regulators are computed using only these data.
\begin{theorem} \label{th:mainresult}
Assume that $A_1$ is anti-stable and suppose, for simplicity, that is diagonalizable. Then the data 
$(U_-,X_{1-},X_2)$ are informative for regulator design if and only if at least one of the following two conditions hold~\footnote{We denote by $\im M$ the image of the matrix $M$}:
\begin{enumerate}
\item
$X_{2-}$ has full row rank, and there exists a right-inverse $X_{2-}^\dagger$ of $X_{2-}$ such that $(X_{2+} - A_3 X_{1-}) X_{2-}^\dagger$ is stable and $D_2 +E U_-X_{2-}^\dagger =0$.  Moreover, $\im D_1 \subseteq \im E$. In this case, a regulator is found as follows: choose $K_1$ such that $D_1 + EK_1 =0$ and define $K_2 := U_- X_{2-}^\dagger$.
\item
$X_{2-}$ has full row rank and there exists a right-inverse $X_{2-}^\dagger$ of $X_{2-}$ such that $(X_{2+} - A_3 X_{1-}) X_{2-}^\dagger$ is stable. Moreover, there exists a solution $W$ to the linear equations
\begin{subequations} \label{e:W}
\begin{align}
X_{2-}W A_1 - (X_{2+} - A_3 X_{1-}) W = A_3 ,\\
D_1 +(D_2 X_{2-} + E U_-) W  = 0,
\end{align} 
\end{subequations}
In this case, a regulator is found as follows: choose $K_1 := U_- (I - X_{2-}^\dagger X_{2-}) W$ and $K_2 := U_- X_{2-}^\dagger$.
\end{enumerate}
\end{theorem}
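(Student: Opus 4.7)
My plan is to prove each direction separately. For sufficiency, I would verify directly that the claimed gains form a regulator for every $(A_2,B_2) \in \Sigma_\calD$. Endo-stability in both cases follows from Lemma \ref{l:endo-spec}, which also yields the constant closed-loop transition matrix $\bar{A} := (X_{2+} - A_3 X_{1-})X_{2-}^\dagger = A_2 + B_2 K_2$ for every compatible pair. In Case 1, the chosen gains satisfy $D_1 + EK_1 = 0$ and $D_2 + EK_2 = 0$, so the regulated output $\bmz(t) = (D_1+EK_1)\bmx_1(t) + (D_2+EK_2)\bmx_2(t)$ vanishes identically and regulation is trivial. In Case 2, I would verify that $T := X_{2-} W$ solves the two equations of Proposition \ref{th:autonomous} for the closed-loop system; this is a direct calculation using the two defining equations for $W$, the identity $X_{2-}(I - X_{2-}^\dagger X_{2-}) = 0$, and the relation $X_{2+} - A_3 X_{1-} = A_2 X_{2-} + B_2 U_-$ valid for every $(A_2,B_2) \in \Sigma_\calD$. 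Proposition \ref{th:autonomous} then delivers output regulation for every compatible system.

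For necessity, suppose $(K_1, K_2)$ is a regulator for all compatible pairs. Lemma \ref{l:endo-spec} together with Proposition \ref{prop:endo-basic}(2) forces $X_{2-}$ to have full row rank and $K_2 = U_- X_{2-}^\dagger$ for some right-inverse with $\bar{A}$ stable, and it makes $A_2 + B_2 K_2 = \bar{A}$ constant over $\Sigma_\calD$. Applying Proposition \ref{th:autonomous} to the closed-loop for each $(A_2,B_2) \in \Sigma_\calD$ yields a unique $T_{B_2}$ satisfying
\begin{align*}
T_{B_2} A_1 - \bar{A}\, T_{B_2} &= A_3 + B_2 K_1, \\
D_1 + E K_1 + (D_2 + E K_2) T_{B_2} &= 0.
\end{align*}
Since $\bar{A}$ is stable and $A_1$ anti-stable, their spectra are disjoint and the Sylvester operator $T \mapsto T A_1 - \bar{A} T$ is a bijection, so $T_{B_2} = T_0 + L(B_2 K_1)$ depends affinely on $B_2$. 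Since every $B_2 \in \R^{n_2 \times m}$ is realized by some $(A_2,B_2) \in \Sigma_\calD$, separating constant and $B_2$-linear parts of the second equation forces $D_1 + EK_1 + (D_2+EK_2) T_0 = 0$ and
$$(D_2 + EK_2)\, L(B_2 K_1) = 0 \quad \text{for every } B_2 \in \R^{n_2 \times m}.$$

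The crux is then a dichotomy argument. Writing $A_1 = S \Lambda S^{-1}$ with $\Lambda = \diag(\lambda_1, \ldots, \lambda_{n_1})$ and setting $\tilde T := T S$, $\tilde A_3 := A_3 S$, $\tilde K_1 := K_1 S$, the Sylvester equation decouples column by column as $\tilde T_{B_2, j} = (\lambda_j I - \bar A)^{-1}(\tilde A_{3,j} + B_2 \tilde K_{1,j})$, and the linear-in-$B_2$ condition becomes $(D_2 + EK_2)(\lambda_j I - \bar A)^{-1} B_2 \tilde K_{1,j} = 0$ for all $B_2$ and all $j$. Since $B_2 \tilde K_{1,j}$ sweeps out $\R^{n_2}$ whenever $\tilde K_{1,j} \neq 0$ and $(\lambda_j I - \bar A)^{-1}$ is nonsingular, either $D_2 + EK_2 = 0$ or $\tilde K_{1,j} = 0$ for all $j$, i.e.\ $K_1 = 0$. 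The first branch combined with the constant-term equation gives $D_1 + EK_1 = 0$, hence $\im D_1 \subseteq \im E$, yielding Case 1. In the second branch $T_{B_2} \equiv T_0$ is independent of $B_2$, and a short computation with $W := X_{2-}^\dagger T_0$ using $X_{2-} X_{2-}^\dagger = I$ shows that $W$ satisfies \eqref{e:W} and recovers $K_1 = 0 = U_-(I - X_{2-}^\dagger X_{2-}) W$, yielding Case 2.

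The main technical obstacle is this dichotomy step; the diagonalizability hypothesis on $A_1$ is precisely what reduces the matrix Sylvester identity to a per-eigenvalue, per-column linear condition that can be resolved pointwise, and dropping the hypothesis would require a more delicate Jordan-block analysis. The remainder is essentially bookkeeping with the two identities $X_{2-} X_{2-}^\dagger = I$ and $A_2 X_{2-} + B_2 U_- = X_{2+} - A_3 X_{1-}$, which let me translate freely between data-based and model-based quantities.
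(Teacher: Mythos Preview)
Your sufficiency argument is correct and essentially the same as the paper's. The necessity argument, however, contains a genuine gap.

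The claim ``every $B_2 \in \R^{n_2 \times m}$ is realized by some $(A_2,B_2) \in \Sigma_\calD$'' is false. The set $\Sigma_\calD$ is the affine solution set of $\bbm A_2 & B_2\ebm \bbm X_{2-}\\ U_-\ebm = X_{2+}-A_3X_{1-}$, and its projection onto the $B_2$ coordinate is an affine subspace that is in general proper---it is a single point whenever $\bbm X_{2-}\\ U_-\ebm$ has full row rank. Hence you are not entitled to ``separate constant and $B_2$-linear parts,'' and the dichotomy ``either $D_2+EK_2=0$ or $K_1=0$'' does not follow. In particular, the conclusion $K_1=0$ in your second branch is too strong, and your choice $W:=X_{2-}^\dagger T_0$ then fails to satisfy \eqref{e:W}: with the common $T$ one gets $X_{2-}WA_1-(X_{2+}-A_3X_{1-})W=TA_1-\bar A\,T=A_3+B_2K_1$, which equals $A_3$ only if $B_2K_1=0$.

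The paper repairs this by working with the \emph{homogeneous} set $\Sigma_\calD^0=\{(A_0,B_0):A_0X_{2-}+B_0U_-=0\}$ and splitting on whether $B_0K_1=0$ for all $(A_0,B_0)\in\Sigma_\calD^0$. If so, $B_2K_1$ is constant over $\Sigma_\calD$ (but $K_1$ need not vanish), there is a common $T$, and one shows via an image-containment argument that $\bbm T\\ K_2T+K_1\ebm$ lies in $\im\bbm X_{2-}\\ U_-\ebm$; any preimage $W$ then satisfies \eqref{e:W}. If not, the per-eigenvalue argument you sketch is run with the admissible perturbations $B_2\mapsto B_2+NS_2^\top$ (where $\im\bbm S_1\\ S_2\ebm=\ker\bbm X_{2-}\\ U_-\ebm^\top$) in place of arbitrary $B_2$, and this is exactly enough to force $D_2+EK_2=0$ and hence $D_1+EK_1=0$, giving condition~1).
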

Before turning to the proof, we will explain how to apply this theorem. What we know about the system are the system matrices $A_1, A_3, D_1,D_2$ and $E$ and the data $(U_-,X_{1-},X_2)$.
The aim is to use this knowledge to compute a {\em single} regulator $(K_1,K_2)$ that works {\em for all} endosystems $(A_2,B_2)$ in the set $\Sigma_D$ defined by \eqref{e:SigmaD}.

In order to check the existence of such regulator, we verify  the two conditions 1) and 2) in Theorem \ref{th:mainresult}. If neither of the two conditions holds, then the data are not informative.  On the other hand, if condition 1) holds then a regulator $(K_1,K_2)$ is computed as follows: 
\begin{itemize}
\item
find a right-inverse $X_{2-}^{\dagger}$ of $X_{2-}$ such that the matrix $(X_{2+} - A_3 X_{1-}) X_{2-}^\dagger$ is stable and $D_2 +E U_-X_{2-}^\dagger =0$,
\item
compute $K_1$ as a solution of $D_1 + EK_1 =0$,
\item
define $K_2 := U_- X_{2-}^\dagger$. 
\end{itemize}
If condition 2) holds then a regulator is computed as follows: 
\begin{itemize}
\item
find a right-inverse $X_{2-}^{\dagger}$ of $X_{2-}$ such that the matrix $(X_{2+} - A_3 X_{1-}) X_{2-}^\dagger$ is stable,
\item
find a solution $W$ of the data-driven regulator equations \eqref{e:W},
\item
define $K_1 := U_- (I - X_{2-}^\dagger X_{2-}) W$,
\item
define $K_2 := U_- X_{2-}^\dagger$.
\end{itemize}

\begin{proof}
($\Rightarrow$) We first prove sufficiency. Assume that the condition 1) holds. Since $(X_{2+} - A_3 X_{1-}) X_{2-}^\dagger$ is stable, the data are informative for endo-stabilization and by taking $K_2 := U_- X_{2-}^\dagger$ we have $A_2 + B_2 K_2$ is stable for all $(A_2,B_2) \in \Sigma_{\calD}$. Since $A_1$ is assumed to be anti-stable, this implies that for all $(A_2,B_2) \in \Sigma_{\calD}$ there exists a unique solution $T$ to the Sylvester equation 
$
TA_1 -( A_2 +B_2K_2)T = A_3 +B_2K_1.
$ 
By the fact that $D_1 + EK_1 = 0$ and $D_2 + EK_2 =0$, this solution $T$ also satisfies
$
D_1 + EK_1 + (D_2 +EK_2)T = 0.
$
Thus, for all $(A_2,B_2) \in \Sigma_\calD$, there exists a  matrix $T$ that satisfies the equations \eqref{e:aut-eq}. It follows from Proposition \ref{th:autonomous} that for all $(A_2,B_2) \in \Sigma_\calD$ the controlled system is endo-stable and output regulated.

Next, assume that condition 2) holds. By Lemma \ref{l:endo-spec}, the data are informative for endo-stabilization and by taking $K_2 : = U_1 X_{-}^{\dagger}$ we have $A_2 + B_2 K_2$ stable for all $(A_2,B_2) \in \Sigma_{\calD}$. Let $W$ satisfy the equations \eqref{e:W}. Define $T := X_{2-} W$ and $V : = U_- W$. Then the pair $(T,V)$ satisfies the regulator equations \eqref{e:reg-eq} for all $(A_2,B_2) \in \Sigma_\calD$. Then,  by Proposition \ref{th:existence}, for each such $(A_2,B_2)$ a regulator is given by the pair $(K_1, K_2)$, with $K_1 = -K_2T + V= -K_2X_{2-}W + U_-W = U_- (I - X_{2-}^{\dagger} X_{2-} ) W$. This completes the proof of the sufficiency part.

We will now turn to the necessity part. Assume that the data are informative for regulator design. By Proposition \ref{th:autonomous}, there exist $K_1$ and $K_2$ and for any $(A_2,B_2) \in \Sigma_{\calD}$ a matrix $T_{(A_2,B_2)}$ such that $A_2 + B_2 K_2$ is stable and 
\begin{align*}
T_{(A_2,B_2)} A_1 - (A_2 + B_2K_2) T_{(A_2,B_2)}  &=  A_3 + B_2 K_1, \\
D_1 + EK_1 + (D_2 + EK_2) T_{(A_2,B_2)} & =    0.
\end{align*}
We emphasize that $T_{(A_2,B_2)}$ may depend on the choice of  $(A_2,B_2) \in \Sigma_{\calD}$. 
However, since $A_2 + B_2 K_2$ is stable for all  $(A_2,B_2) \in \Sigma_{\calD}$, by Proposition \ref{prop:endo-basic} 
there exists a right-inverse $X_{2-}^{\dagger}$ of $X_{2-}$ such that $A_2 + B_2 K_2 = (X_{2+} - A_3 X_{1-}) X_{2-}^{\dagger}$ for all $(A_2,B_2) \in \Sigma_{\calD}$.  The latter matrix is independent of $(A_2,B_2)$. Call it $M$.
Define
$$
\Sigma_{\calD}^0 : = \{ (A_0,B_0) \mid \begin{bmatrix} A_0 & B_0 \end{bmatrix} \begin{bmatrix} X_{2-} \\ U_- \end{bmatrix} = 0 \}.
$$
 Note that $\Sigma_{\calD}^0$ is the solution space of the homogeneous version
 of  the defining equation \eqref{e:data} for $\Sigma_{\calD}$ (see \eqref{e:SigmaD}). 
We now distinguish two cases, namely (i) $B_0 K_1 = 0$ for all $(A_0,B_0) \in \Sigma_{\calD}^0$,  and (ii) $B_0 K_1 \neq  0$ for some $(A_0,B_0) \in \Sigma_{\calD}^0$.

First consider case (i). Then for all $(A_2,B_2), (\bar{A}_2,\bar{B}_2) \in \Sigma_{\calD}$ we have $B_2K_1 = \bar{B}_2 K_1$.  Thus, there exists a {\em common} matrix $T$  that solves the equations
\begin{align*}
T  A_1 - M T  =  A_3 + B_2 K_1, \\
D_1 + EK_1 + (D_2 + EK_2) T  =  0,
\end{align*}
for all $(A_2,B_2) \in \Sigma_{\calD}$. From this, we obtain 
$$
T A_1 - \begin{bmatrix} A_2 & B_2 \end{bmatrix} \begin{bmatrix} T \\ K_2 T + K_1 \end{bmatrix} = A_3
$$
for all $(A_2,B_2) \in \Sigma_{\calD}$, and therefore
$$
\begin{bmatrix} A_0 & B_0 \end{bmatrix} \begin{bmatrix} T \\ K_2 T + K_1 \end{bmatrix} = 0
$$
for all $(A_0,B_0) \in \Sigma_{\calD}^0$. This implies
$$
\im \begin{bmatrix} T \\ K_2 T + K_1 \end{bmatrix} \subseteq \im \begin{bmatrix} X_{2-} \\ U_- \end{bmatrix}.
$$
As a consequence, there exists a matrix $W$ such that 
$$ 
\begin{bmatrix} T \\ K_2 T + K_1 \end{bmatrix} = \begin{bmatrix} X_{2-} \\ U_- \end{bmatrix} W.
$$
Clearly, $W$ satisfies the equations \eqref{e:W}, showing that condition 2) holds.

Next, consider case (ii). Let $S$ be a real $(n_2 + m) \times r$ matrix such that \footnote{We denote by $\ker M$ the kernel of the matrix $M$}
$$
\ker \bbm X_{2-} \\ U_- \ebm^T =  \im S.
$$
Partition $S = \bbm S_1 \\ S_2 \ebm$. Then $(A_0,B_0) \in \Sigma_{\calD}^0$ if and only if $A_0 = NS_1^T$ and $B_0 = NS_2^T$ for some $n_2 \times r$ matrix $N$. Note that, by hypothesis, $S_2^T K_1 \neq 0$. 

Let $(A_2, B_2) \in \Sigma_{\calD}$. Recall that for any such $(A_2, B_2)$ there exists a unique $T_{(A_2,B_2)}$ such that 
\begin{align}
T_{(A_2,B_2)} A_1 - M T_{(A_2,B_2)}  =  A_3 + B_2 K_1 \nonumber \\
D_1 + EK_1 + (D_2 + EK_2) T_{(A_2,B_2)} =    0 \label{e:output}
\end{align}
Now let $N$ be any real $n_2 \times r$ matrix. Then also $(A_2+ NS_1^T, B_2 + NS_2^T) \in \Sigma_{\calD}$.
Define $T_N := T_{(A_2,B_2)} - T_{(A_2 + NS_1^T, B_2+ NS_2^T)}$. Then clearly $T_N$ is the unique solution to 
\begin{equation}  \label{e:Syl}
T_N A_1 - MT_N = NS_2^TK_1,
\end{equation}
which in addition satisfies $(D_2 + EK_2)T_N = 0$. 
Consider now a spectral decomposition $A_1 = Q^{-1} \Lambda Q$, where $\Lambda$ is the diagonal matrix $\Lambda = \diag(\lambda_1, \ldots \lambda_{n_1})$ and 
$$
Q = \bbm q_1 \\ \vdots \\ q_{n_1} \ebm, ~ Q^{-1} = \bbm \hat{q}_1 & \ldots & \hat{q}_{n_1} \ebm.
$$
Then, for fixed $N$, the unique solution $T_N$ to the Sylvester equation \eqref{e:Syl} can be expressed as
$$
T_N = \sum_{i =1}^{n_1} (\lambda_i I - M)^{-1} N S_2^T K_1 \hat{q}_i q_i
$$
(see \cite{Antoulas2005}),  which implies that $T_N Q^{-1}$ is equal to 
$$
\bbm (\lambda_1 I -M)^{-1} N S_2^T K_1 \hat{q}_1 & \hspace{-2mm}\ldots \hspace{-2mm} & ( \lambda_{n_1} I -M)^{-1}N S_2^T K_1 \hat{q}_{n_1} \ebm.
 $$
Note that the matrices $\lambda_i I -M$ are indeed invertible since $M$ is stable and the eigenvalues $\lambda_i$ of $A_1$ satisfy $|\lambda_i |  \geq 1$. Since, in addition, $(D_2 + EK_2)T_N = 0$, we see that for all $i = 1, \ldots ,n_1$ we have
$$
(D_2 + EK_2)(\lambda_1 I -M)^{-1} N S_2^T K_1 \hat{q}_i =0.
$$ 
Since $S_2^T K_1 \neq 0$, there must exist an index $i$ such that $S_2^TK_1 \hat{q}_i  \neq 0$.
For this $i$, let $z$ be a real vector such that $z^T S_2^T K_1 \hat{q}_i \neq 0$. Now choose $N := e_j z^T$, where $e_j$ denotes the $j$th standard basis vector in $\mathbb{R}^{n_2}$. By the discussion above we obtain
$
(D_2 + EK_2) (\lambda_1 I - M)^{-1} e_j = 0.
$
Since this holds for any $j$, we actually find $(D_2 + EK_2) (\lambda_1 I - M)^{-1} =0$, so $D_2 + EK_2 = 0$. Using \eqref{e:output}, we must also conclude that $D_1 + EK_1 =0$, which implies $\im D_1 \subseteq \im E$. Since $K_2$ is stabilizing it must be of the form $U_- X_{2-}^{\dagger}$ for some right-inverse $X_{2-}^{\dagger}$. This implies that $(X_{2+} - A_3 X_{1-}) X_{2-}^\dagger$ is stable and $D_2 +E U_-X_{2-}^\dagger =0$, that is, condition 1) holds.
This completes the proof of Theorem \ref{th:mainresult}. 
\end{proof}
\begin{remark}
In order to avoid  technicalities, in Theorem \ref{th:mainresult} we have assumed that the matrix $A_1$ is diagonalizable. The theorem however also holds if we drop this assumption. We omit the proof here. 
\end{remark}

\begin{remark}
According to Theorem 8, the data are informative for regulator design if and only if at least one of the conditions 1) or 2) holds. Condition 2) is in terms of solvability of the `data driven regulator equations' (15a) and (15b). These equations hold for all $(A_2, B_2)$ compatible with the data. In the end a matrix $T$ is defined as $T : = X_{2-}W$ and together with $V: = U_-W$ the classical regulator equations 
(14) are then satisfied for all $(A_2, B_2)$ compatible with the data. This is then `the classical design', and the difference $x_2(t) - Tx_1(t)$ converges to 0 as $t$ runs off to infinity (see \cite{Trentelman2001}, page 199)

If Condition 2) does not hold, but instead Condition 1) holds, then the only way to get output regulation is to make the entire output $z = (D_1 +EK_1) x_1 + (D_2 + E K_2) x_2$ equal to 0 {\em pointwise}. This is done by making $D_1 +EK_1 =0$ (possible because ${\rm im} ~D_1 \subseteq {\rm im}~E$\;) and $D_2 + E K_2 = 0$, where $K_2= U_- X_{2-}^{\dagger}$ also makes the system endo-stable.
\end{remark}

Note that Theorem \ref{th:mainresult} gives a characterization of all data that are
informative for regulator design, and gives a method to design a suitable regulator.
Nonetheless, the procedure to compute
this regulator is not entirely satisfactory. Indeed, in the case that condition 2) holds it is not
clear how to find a right inverse of $X_{2-}$  such that $(X_{2+} - A_3 X_{1-}) X_{2-}^\dagger$ is stable. In the case of condition 1), the additional constraint $D_2 +E U_-X_{2-}^\dagger =0$ needs to be satisfied. In general, $X_{2-}$ has many right inverses, and $(X_{2+} - A_3 X_{1-}) X_{2-}^\dagger$ can be stable, with or without $D_2 +E U_-X_{2-}^\dagger =0$, depending on the choice of the particular right inverse $X_{2-}^\dagger$.
To deal with this problem and to solve the problem of regulator design, we formulate the problem of finding a suitable right inverse in terms of feasibility of linear matrix inequalities (LMI's).

\begin{theorem} \label{th:regulator-with-LMI's}
Let $(U_-,X_{1-},X_2)$ be given data. Then the following hold:
\begin{enumerate}
\item
$X_{2-}$ has full row rank and has a right inverse $X_{2-}^{\dagger}$ such that  $(X_{2+} - A_3 X_{1-}) X_{2-}^\dagger$ is stable if and only if there exists a matrix $\Theta \in \mathbb{R}^{T \times n}$ such that 
\begin{equation} \label{e:regulator-sym}
X_{2-} \Theta = (X_{2-} \Theta)^\top 
\end{equation}
and
\begin{equation} \label{e:regulator-LMI}
 \bbm X_{2-} \Theta & (X_{2+} - A_3 X_{1-})\Theta \\
                                                                          \Theta^\top (X_{2+} - A_3 X_{1-})^\top & X_{2-} \Theta \ebm > 0.
\end{equation}    
\item $X_{2-}$ has full row rank and has a right inverse $X_{2-}^{\dagger}$ such that  $(X_{2+} - A_3 X_{1-}) X_{2-}^\dagger$ is stable with, in addition, $D_2 +E U_-X_{2-}^\dagger =0$ if and only if there exists a solution $\Theta \in \mathbb{R}^{T \times n}$ of \eqref{e:regulator-sym} and \eqref{e:regulator-LMI} that satisfies the linear equation $$(D_2X_{2-} + EU_-)\Theta = 0.$$     
\end{enumerate}
In both cases, a suitable right-inverse is given by $X_{2-}^{\dagger}: =   \Theta (X_{2-}\Theta)^{-1}$.                                                 
\end{theorem}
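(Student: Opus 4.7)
The plan is to recognize this statement as an LMI reformulation of discrete-time Schur stability, obtained through a change of variables that parameterizes right inverses of $X_{2-}$. The underlying classical fact is that a matrix $M$ is stable if and only if there exists $P = P^\top > 0$ with $P - MPM^\top > 0$, which by the Schur complement is equivalent to
$$\bbm P & MP \\ PM^\top & P \ebm > 0.$$
The substitution that linearizes this inequality is $X_{2-}^\dagger := \Theta (X_{2-}\Theta)^{-1}$ together with $P := X_{2-}\Theta$. This is a bijection between pairs $(X_{2-}^\dagger, P)$, with $X_{2-}^\dagger$ a right inverse of $X_{2-}$ and $P = P^\top > 0$, and matrices $\Theta$ such that $X_{2-}\Theta$ is symmetric and positive definite, the inverse map being $\Theta := X_{2-}^\dagger P$. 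Setting $M := (X_{2+} - A_3 X_{1-})X_{2-}^\dagger$, one computes
$$MP = (X_{2+} - A_3 X_{1-})\Theta (X_{2-}\Theta)^{-1} X_{2-}\Theta = (X_{2+} - A_3 X_{1-})\Theta,$$
so every occurrence of $M$ and $P$ in the Schur-complement block inequality becomes linear in $\Theta$, yielding exactly \eqref{e:regulator-LMI}, while symmetry of $P$ is \eqref{e:regulator-sym}.

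For the forward direction of item (1), I would assume $X_{2-}$ has full row rank and that a right inverse $X_{2-}^\dagger$ exists with $M$ Schur stable. By the discrete Lyapunov theorem one can pick $P = P^\top > 0$ satisfying $P - MPM^\top > 0$, and set $\Theta := X_{2-}^\dagger P$. Then $X_{2-}\Theta = P$ gives \eqref{e:regulator-sym}, and expanding $MP$ as above, followed by Schur complement, gives \eqref{e:regulator-LMI}. Conversely, given $\Theta$ satisfying both conditions, the positive-definite diagonal blocks imply $X_{2-}\Theta > 0$, so in particular $X_{2-}$ has full row rank and $X_{2-}^\dagger := \Theta(X_{2-}\Theta)^{-1}$ is a well-defined right inverse; reversing the Schur complement with $P = X_{2-}\Theta$ yields $P - MPM^\top > 0$, which by the Lyapunov theorem establishes stability of $M$.

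For item (2), the side constraint $D_2 + EU_-X_{2-}^\dagger = 0$ is equivalent, after right-multiplication by the invertible matrix $X_{2-}\Theta$, to
$$(D_2 + EU_-X_{2-}^\dagger)(X_{2-}\Theta) = (D_2 X_{2-} + EU_-)\Theta = 0,$$
which is exactly the linear equation in the statement. Since $X_{2-}\Theta$ is invertible whenever the LMI holds, this equivalence is genuinely bidirectional, so one simply appends this linear equation to the equivalence already established in item (1). The formula $X_{2-}^\dagger = \Theta(X_{2-}\Theta)^{-1}$ is the one that emerges in the reverse direction of both parts.

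The main obstacle I anticipate is purely bookkeeping: keeping track of which intermediate quantities are invertible, verifying the bijectivity of the change of variables, and confirming that the Schur complement can be applied in both directions (which it can, once the $(1,1)$-block is verified positive definite). No conceptually hard step is expected, since the argument is the classical stabilization-via-LMI trick adapted to a parameterization of right inverses of a data matrix in place of state-feedback gains.
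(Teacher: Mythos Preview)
Your argument is correct and is precisely the adaptation of \cite[Theorem~17]{vanWaarde2020} that the paper invokes: the discrete Lyapunov inequality $P-MPM^\top>0$ is rewritten via Schur complement, then linearized by the change of variables $\Theta=X_{2-}^\dagger P$, $P=X_{2-}\Theta$, and the extra linear constraint in item~(2) is handled by right-multiplying by the invertible $X_{2-}\Theta$. One small point worth making explicit in your write-up: in the forward direction of item~(2) you use $X_{2-}X_{2-}^\dagger=I$ to get $(D_2X_{2-}+EU_-)\Theta=(D_2+EU_-X_{2-}^\dagger)P$, whereas in the reverse direction you use $X_{2-}^\dagger(X_{2-}\Theta)=\Theta$; both are immediate, but the two directions are not literally the same computation.
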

\begin{proof}
The proof can be given by adapting the proof of Theorem 17 in \cite{vanWaarde2020}.
\end{proof}
\begin{example}
We will now apply Theorem \ref{th:mainresult} to Example \ref{ex: example}. Putting the example in our general framework we have 
$$
\bmx_1 = \begin{bmatrix} \bmr_1 \\ \bmr_2 \\\bmd \end{bmatrix}, ~~\bmx_2 = \bmx,~~
A_1 = \begin{bmatrix} 0 & 1 & 0 \\ 
                                    -1 & 0  & 0 \\
                                    0 & 0 & 1
           \end{bmatrix}, 
$$                         
$$
A_3 = \begin{bmatrix} 0 & 0 & 1 
           \end{bmatrix},~~
D_1 = \begin{bmatrix} 1  & 0 & 0 \end{bmatrix}, ~~ D_2 = -1, ~~ E = 0.
$$
Assume $\tau = 3$, and the data on the disturbance input are $D_- = \begin{bmatrix} d(0) & d(1) & d(2) \end{bmatrix} =  \begin{bmatrix}  \frac{1}{2} &  \frac{1}{2} &   \frac{1}{2} \end{bmatrix}$. Since the signal to be tracked is $\cos \frac{1}{2} \pi t$, we must have $\bmr_1(0) = 1$, $\bmr_2(0) = 0$ so $\bmr_1(t) =  \cos \frac{1}{2} \pi t$ and $\bmr_2(t) = \cos \frac{1}{2} \pi(t + 1)$. This leads to 
$$
X_{1-} = \begin{bmatrix} r_1(0) & r_1(1) & r_1(2) \\
                                   r_2(0) & r_2(1) & r_2(2) \\
                                   d(0) & d(1) & d(2) \end{bmatrix} = 
         \begin{bmatrix}  1 & 0  & -1 \\
                                   0 & -1 & 0 \\
                                   \frac{1}{2} &  \frac{1}{2} &  \frac{1}{2} \end{bmatrix}.
$$
Assume that 
$
U_- =  \begin{bmatrix} u(0) & u(1) & u(2) \end{bmatrix} =  \begin{bmatrix} 1  & 0 &  0 \end{bmatrix}
$
and 
$
X_2 =  \begin{bmatrix} x_2(0) & x_2(1) & x_2(2) & x_2(3) \end{bmatrix} =  \begin{bmatrix}  0 &  \frac{3}{2}  & 2  &   \frac{5}{2} \end{bmatrix}.
$
It can be checked that condition 2) of Theorem \ref{th:mainresult} holds. Indeed, a solution $W$ to the linear equations \eqref{e:W} is given by
$$
W =  \begin{bmatrix} -1  & 1 & -1 \\
                                  \frac{2}{3} & 0   &  0 \\
                                   0  &  0  &  0 \end{bmatrix}.
$$   
Furthermore, $X_{2-}^{\dagger}  =   \begin{bmatrix}  -\frac{1}{2} &  \frac{2}{3}  & 0   \end{bmatrix}^T$ is a right-inverse of $X_{2-}$ and $(X_{2+} - A_3 X_{1-}) X_{2-}^\dagger = \frac{1}{2}$ is stable.  A regulator is then given by
$K_1 =  U_- (I - X_{2-}^\dagger X_{2-}) W =  \begin{bmatrix}  -\frac{1}{2} &  1  & -1   \end{bmatrix}$ and $K_2 := U_- X_{2-}^\dagger = -\frac{1}{2}$.


It can be checked that the above data are compatible with the true endosystem $a_s = 1, b_s =1$. In fact, in this particular example, the true system is uniquely determined by the data. Indeed, this follows from the fact that  
$$
X_{2+} = \begin{bmatrix} a_s & b_s \end{bmatrix} \begin{bmatrix} X_{2-} \\ U_-  \end{bmatrix} + D_-~,
$$
in which 
$
\begin{bmatrix} X_{2-} \\ U_-  \end{bmatrix}
$
has full row rank. Thus, a regulator could also have been computed directly from the regulator equations \eqref{e:reg-eq} after first identfying the true endosystem $a_s = 1, b_s =1$. It can indeed be verified that $T=  \begin{bmatrix} 1 & 0 & 0 \end{bmatrix}$ together with $V = \begin{bmatrix} -1 & 1 & -1 \end{bmatrix}$ satisfy the regulator equations \eqref{e:reg-eq} for the true endosystem. By choosing $K_2 =  -\frac{1}{2}$, this would then lead to the same regulator as above with $K_1 = -K_2T + V = \begin{bmatrix}  -\frac{1}{2} &  1  & -1   \end{bmatrix}$. 
\end{example}
We note that, in general, the true endosystem may {\em not} be uniquely determined by the data. 
This is illustrated by the following example.
\begin{example} \label{ex:nonid}
Consider the two-dimensional endosystem
$$
\bmx_2(t+1) = A_{2s}\bmx_2(t) + B_{2s}\bmu(t) + \begin{bmatrix} 0 \\ 1 \end{bmatrix} \bmd(t),
$$
where $A_{2s}$ and $B_{2s}$ are unknown $2\times 2$  and $2 \times 1$ matrices, respectively. Let 
$\bmx_2 = \begin{bmatrix} \bmx_{21} & \bmx_{22} \end{bmatrix}^T$. The disturbance input $\bmd$ is assumed to be a constant signal with finite amplitude, so is generated by $\bmd(t+1) = \bmd(t)$. We want to design a regulator so that $2\bmx_{21} + \frac{1}{2} \bmx_{22}$ tracks a given reference signal.
 In this example, the reference signals $\bmr$ are assumed to be generated by a given autonomous linear system with state space dimension, say, $n_1$. Its representation will be irrelevant here. The total exosystem will then have state space dimension $n_1 + 1$, and our output equation is given by
$
\bmz(t) = D_1 \bmx_1(t) + D_2 \bmx_2(t) + E \bmu(t),
$
with $D_1$ a $1 \times (n_1 + 1)$ matrix such that $D_1\bmx_1 = - \bmr$ and $D_2 = \bbm 2 & \frac{1}{2} \ebm$. We take $E = 2$. Also note that 
$
A_3 = {\small \bbm  0_{1 \times n_1}  &  0 \\
                    0_{1 \times n_1}  & 1   \ebm.}
$
Here, $0_{1 \times n_1}$ denotes $1 \times n_1$ zero matrix.  Suppose that $\tau = 2$ and assume we have the following data:
$$
U_- = \bbm -1 & -1 \ebm, ~D_-  =  \bbm  1 &  1    \ebm,~
X_2   = \bbm 1 & \frac{1}{2} & -\frac{1}{4} \\
                        0  &      2         &   \frac{5}{2}   \ebm. 
$$
These data can be seen to be generated by the true endosystem
$
A_{2s} = {\small \bbm 2 & ~ \frac{1}{8} \vspace{.6mm}\\ 
                 4          & ~ \frac{5}{4} \ebm,} ~ 
B_{2s} = {\small \bbm \frac{3}{2} \vspace{.6mm}\\ 3 \ebm}.
$
We now check condition 1) of Theorem \ref{th:mainresult}. First note that, indeed, $\im D_1 \subseteq \im E$.  Also, $X_{2-}$ is non-singular and 
$
(X_{2+} - A_3 X_{1-}) X_{2-}^{-1} = {\small \bbm  \frac{1}{2}   & -\frac{1}{4} \\
                                                                    1                &  \frac{1}{2} \ebm}.
$
This matrix has eigenvalues $\frac{1}{2} \pm \frac{1}{2}i$, so is stable. Finally,
$D_2 +E U_-X_{2-}^{-1} =0$. According to Theorem \ref{th:mainresult}, a regulator for all endosystems compatible with the given data is given by
\begin{equation} \label{e:reg1}
K_2 = U_1 X_{2-}^{-1} = \bbm -1 & -\frac{1}{4} \ebm,~ K_1 = - \frac{1}{2} D_1.
\end{equation}
It can be verified that the set of endosystems compatible with our data is equal to the affine set
$$
\Sigma_{\calD} = \{ \left( \bbm a & \frac{1}{4} a - \frac{3}{8} \vspace{.6mm}\\
                                        b  & \frac{1}{4} b + \frac{1}{4} \ebm, 
                                        \bbm a - \frac{1}{2}  \vspace{.6mm} \\ b -1 \ebm \right )\mid a,b \in \mathbb{R} \}.
$$
The controller given by \eqref{e:reg1} is a regulator for all these endosystems.
\end{example}
\begin{remark}
It is also possible to consider the situation that, in addition to $A_2$ and $B_2$, also the matrix $A_3$ (representing how the exosignal $\bmx_1$ enters the endosystem) is unknown. In that case, the set all endosystems compatible with the data $(U_-,X_2,X_-)$ is defined as follows:
\[
\Sigma_{D} = \{(A_2,B_2,A_3) \mid X_{2+} = A_2 X_{2-} + B_2U_- + A_3 X_{1-} \}.
\]
The data are then called informative for regulator design if there exists a single regulator $u = K_1 x_1 + K_2 x_2$ for all endosystems in $\Sigma_D$.  
The analogue of Theorem \ref{th:mainresult} for this situation is as follows. Both In Conditions 1) and 2), an additional condition  $X_{1-}X_{2-}^\dagger =0$ should be imposed on a suitable right-inverse of $X_{2-}$. In addition, in Condition 2), the old data-driven regulator equations \eqref{e:W} should be replaced by:
\begin{subequations} \label{e:Wnew}
\begin{align}
X_{2-}W A_1 - X_{2+} W = 0 ,\\
X_{1-} W = I, \\
D_1 +(D_2 X_{2-} + E U_-) W  = 0.
\end{align} 
\end{subequations}
Note that, as expected, $A_3$ no longer appears in the equations (it is unknown). In both cases, the formulas for $K_1$ and $K_2$ are the same as in Theorem \ref{th:mainresult}. Due to space limitations, the proof is omitted.
\end{remark}

\section{Conclusions}              
We have introduced the notion of data informativity in the context of  the classical algebraic regulator problem.  Our main results are necessary and sufficient conditions for a given set of data to be informative for regulator design, and formulas to compute regulators using only this set of data. We have recast the computation of suitable regulators in terms of feasibility of LMI's. Our results have been illustrated by means of two extended examples. In the present paper, only static state feedback regulators have been considered. As an open problem for future research we mention the extension to dynamic output feedback regulators. Results obtained in \cite{vanWaarde2020} on the problem of stabilization by dynamic output feedback (both in terms of input-state-output data and input-output data) are expected to be relevant here. 
Another possible venue for future research is to consider the situation that, in addition to $A_2$, $A_3$ and $B_2$, also the matrix $A_3$ is unknown. Finally, it would be interesting to  include noise in the problem formulation, and to consider the situation in which, in addition to the modeled disturbances, bounded noise may enter the unknown endosystem (see also \cite{vanWaarde2020b}).

\vspace{-3mm}
\bibliographystyle{plain}
\bibliography{references.bib}

\end{document}